\definecolor{indigo}{rgb}{0.6,0,1}
\definecolor{lavender}{rgb}{1,0,1}
\definecolor{DarkGreen}{rgb}{0,0.75,0}
\definecolor{DarkBlue}{rgb}{0,0,0.8}
\definecolor{Navy}{rgb}{0,0,0.314}
\definecolor{Yellow}{rgb}{0.95,0.75,0}
\definecolor{Cyan}{rgb}{0,1,1}
\definecolor{Pink}{rgb}{1,0.431,1}
\crefname{conjecture}{Conjecture}{Conjectures}
\newtheorem{theorem}{Theorem}[section]
\newtheorem{proposition}[theorem]{Proposition}
\newtheorem{question}[theorem]{Question}
\newtheorem{problem}[theorem]{Problem}
\newtheorem{lemma}[theorem]{Lemma}
\theoremstyle{definition}
\newcommand{\id}{\mathrm{id}}
\DeclareMathOperator{\Var}{Var}
\newcommand{\bpi}{\boldsymbol{\pi}}
\newcommand{\wt}{\mathrm{wt}}
\begin{document}

\title[]{Rainbow Stackings of Random Edge-Colorings}
\subjclass[2010]{}

\author[]{Noga Alon}
\address{Department of Mathematics, Princeton University, Princeton, NJ 08544, USA and Schools of Mathematics and Computer Science, Tel Aviv University, Tel Aviv 69978, Israel}
\email{nalon@math.princeton.edu}

\author[]{Colin Defant}
\address[]{Department of Mathematics, Harvard University, Cambridge, MA 02138, USA}
\email{colindefant@gmail.com}

\author[]{Noah Kravitz}
\address[]{Department of Mathematics, Princeton University, Princeton, NJ 08540, USA}
\email{nkravitz@princeton.edu}

\begin{abstract}
A \emph{rainbow stacking} of $r$-edge-colorings $\chi_1, \ldots, \chi_m$ of the complete graph on $n$ vertices is a way of superimposing $\chi_1, \ldots, \chi_m$ so that no edges of the same color are superimposed on each other.  We determine a sharp threshold for $r$ (as a function of $m$ and $n$) governing the existence and nonexistence of rainbow stackings of random $r$-edge-colorings $\chi_1,\ldots,\chi_m$. 
\end{abstract} 

\maketitle

\section{Introduction}\label{sec:intro}

Let $\mathfrak S_n$ denote the symmetric group of permutations of the set $[n]\coloneq\{1,\ldots,n\}$. Let $K_n$ denote the complete graph with vertex set $[n]$ and edge set $\binom{[n]}{2}$. Consider a set $\mathcal C_r$ of $r$ colors, and let $\chi_1,\ldots,\chi_m\colon\binom{[n]}{2}\to\mathcal C_r$ be edge-colorings of $K_n$. A \emph{{\color{red}r}{\color{orange}a}{\color{yellow}i}{\color{DarkGreen}n}{\color{DarkBlue}b}{\color{indigo}o}{\color{lavender}w} {\color{red}s}{\color{orange}t}{\color{yellow}a}{\color{DarkGreen}c}{\color{DarkBlue}k}{\color{indigo}i}{\color{lavender}n}{\color{red}g}} of $\chi_1,\ldots,\chi_m$ is a tuple ${\boldsymbol{\sigma}=(\sigma_1,\ldots,\sigma_m)\in\mathfrak S_n^m}$ such that for each edge $e\in \binom{[n]}{2}$, the colors $$\chi_1(\sigma_1^{-1}(e)),\ldots,\chi_m(\sigma_m^{-1}(e))$$ are all distinct (where $\sigma_k^{-1} (\{i,j\})\coloneq\{\sigma_k^{-1}(i),\sigma_k^{-1}(j)\}$). Less formally, a rainbow stacking is a way of stacking copies of $K_n$ with the colorings $\chi_1, \ldots, \chi_m$ on top of each other so that no edge is stacked above another edge of the same color (see \cref{fig:rainbow}).  

\begin{figure}[]
\begin{center}{\includegraphics[height=6.784cm]{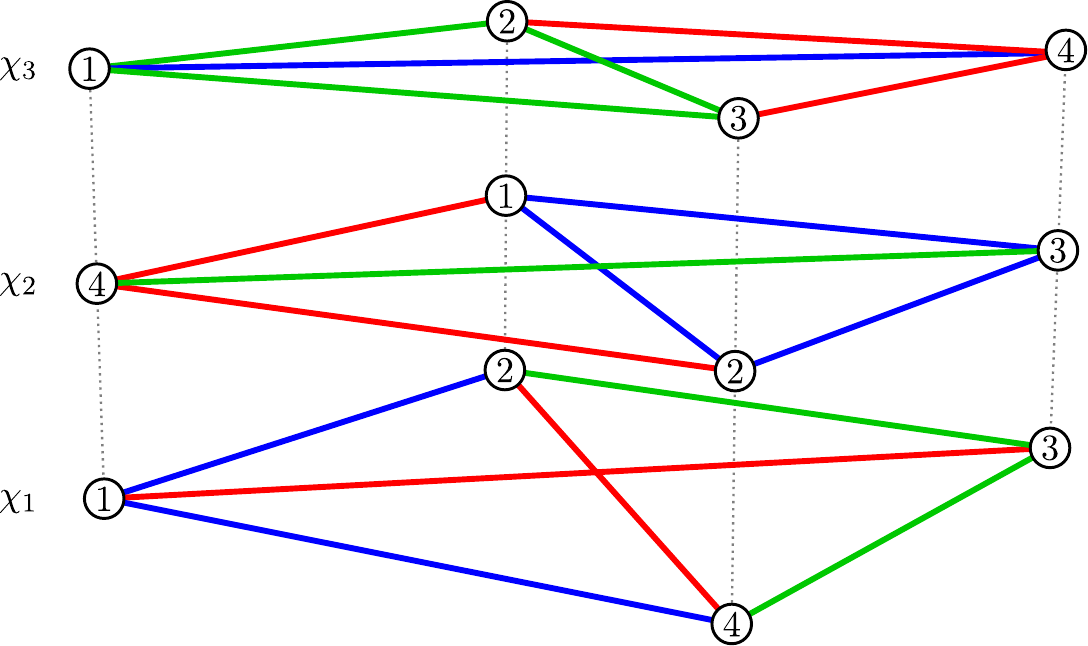}}
\end{center}
\caption{A rainbow stacking $\bpi\in\mathfrak S_4^3$ of $3$ edge-colorings of $K_4$, where there are $r=3$ total colors. The permutations in the tuple $\bpi$ are (in one-line notation) $\pi_1=1234$, $\pi_2=2431$, and $\pi_3=1243$.}
\label{fig:rainbow}
\end{figure}

We are interested in the existence of rainbow stackings, especially when $\chi_1,\ldots,\chi_m\colon \binom{[n]}{2} \to \mathcal C_r$ are independent uniformly random colorings.  When $m$ is fixed and $n$ is growing, we wish to determine which values of $r$ (in terms of $n$) guarantee the existence or nonexistence of rainbow stackings. In what follows, the phrase \emph{with high probability} always means \emph{with probability tending to $1$ as $n\to\infty$}. Our main theorem determines a sharp threshold that governs whether rainbow stackings exist with high probability or do not exist with high probability.  

Using the first-moment method, one can quickly find a upper bound on $r$ that guarantees the nonexistence of a rainbow stacking with high probability, as follows.  Let $\chi_1,\ldots,\chi_m\colon\binom{[n]}{2}\to\mathcal C_r$ be independent uniformly random edge-colorings.   For each $\boldsymbol{\sigma} \in \mathfrak S_n^m$, let $Z_{\boldsymbol{\sigma}}$ be the indicator function for the event that $\boldsymbol{\sigma}$ is a rainbow stacking of $\chi_1, \ldots, \chi_m$, and let $Z\coloneq\sum_{\boldsymbol{\sigma}} Z_{\boldsymbol{\sigma}}$ be the total number of rainbow stackings of $\chi_1, \ldots, \chi_m$. Note that $Z$ is always a multiple of $n!$; indeed, if $(\sigma_1, \ldots, \sigma_m)$ is a rainbow stacking of $\chi_1,\ldots,\chi_m$, then so is each tuple of the form $(\tau\sigma_1, \ldots, \tau\sigma_m)$ for $\tau \in \mathfrak{S}_n$. For each $\boldsymbol{\sigma} \in \mathfrak{S}_n^m$, the expectation of $Z_{\boldsymbol{\sigma}}$ is exactly
$$E_{n,m,r}\coloneq\prod_{i=1}^{m-1}\left(1-\frac{i}{r}\right)^{\binom{n}{2}}.$$
Consequently,
\begin{equation}\label{eq:first-moment-expression}
\mathbb{E}[Z] =n!^m E_{n,m,r}\leq n!^m \left(\prod_{i=1}^{m-1} e^{-i/r} \right)^{\binom{n}{2}}=n!\exp\left((m-1) \log(n!)-\binom{m}{2}\binom{n}{2}\cdot\frac{1}{r} \right).
\end{equation}
If there is a function $\omega\colon\mathbb N\to\mathbb R$ such that $\lim_{n\to\infty}\omega(n)=\infty$ and 
\[r\leq\frac{m\binom{n}{2}}{2\log(n!)}-\frac{\omega(n)}{(\log n)^2},\] then 
$$m\binom{n}{2}\cdot \frac{1}{r}-2\log(n!) \to \infty,$$
so $\mathbb{E}[Z]=o_m(n!)$ as $n \to \infty$. In this case, since the value of $Z$ is always a nonnegative integer multiple of $n!$, Markov's Inequality implies that
$$\mathbb{P}[Z>0]=\mathbb{P}[Z \geq n!] \leq \mathbb{E}[Z]/n!=o_m(1),$$
so with high probability, there are no rainbow stackings of $\chi_1, \ldots, \chi_m$.  This establishes the first half of the following theorem; the proof of the second half is the main work of this paper. 

\begin{theorem}\label{thm:main}
Fix an integer $m\geq 2$ and a function $\omega\colon \mathbb{N} \to \mathbb{R}$ such that $\lim_{n \to \infty} \omega(n)=\infty$.  For each $n\geq 1$, let $\chi_1,\ldots, \chi_m\colon \binom{[n]}{2} \to \mathcal{C}_r$ be independent uniformly random $r$-edge-colorings. If $r=r(n) \geq 1$ satisfies
\begin{equation}\label{eq:main1}
r \leq \frac{m\binom{n}{2}}{ 2\log(n!)}-\frac{\omega(n)}{(\log n)^2},
\end{equation}
then with high probability, there does not exist a rainbow stacking of $\chi_1,\ldots,\chi_m$. If 
\begin{equation}\label{eq:main2}
r \geq \frac{m\binom{n}{2}}{2\log(n!)}+\frac{2m-1}{3}+\frac{m}{2\log n}+\frac{\omega(n)}{(\log n)^2}, 
\end{equation}
then with high probability, there exists a rainbow stacking of $\chi_1,\ldots,\chi_m$. 
\end{theorem}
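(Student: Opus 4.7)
The plan is to use the second-moment method: by the Paley--Zygmund inequality, it suffices to show that $\mathbb{E}[Z^2] = (1+o(1))\,\mathbb{E}[Z]^2$ under \eqref{eq:main2}. The first step is a symmetry reduction. For any $\boldsymbol{\sigma},\boldsymbol{\tau} \in \mathfrak{S}_n^m$, the measure-preserving substitution $\chi_k \mapsto \chi_k \circ \sigma_k^{-1}$ simultaneously normalizes $\boldsymbol{\sigma}$ to $(\id,\ldots,\id)$ and replaces $\boldsymbol{\tau}$ by $\boldsymbol{\rho} = (\tau_1\sigma_1^{-1},\ldots,\tau_m\sigma_m^{-1})$. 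Writing $p(\boldsymbol{\rho}) \coloneq \mathbb{P}[(\id,\ldots,\id) \text{ and } \boldsymbol{\rho} \text{ are both rainbow stackings}]$, this gives
\[
\mathbb{E}[Z^2] \;=\; n!^{m}\sum_{\boldsymbol{\rho} \in \mathfrak{S}_n^m} p(\boldsymbol{\rho}),
\]
so the task is to prove $\sum_{\boldsymbol{\rho}} p(\boldsymbol{\rho}) \leq (1+o(1))\, n!^{m} E_{n,m,r}^2$.

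The heart of the argument is estimating $p(\boldsymbol{\rho})$. For each color $k$ and each edge $f$, the random variable $\chi_k(f)$ participates in exactly two of the $2\binom{n}{2}$ rainbow constraints: the identity-constraint at $f$ (comparing $\chi_1(f),\ldots,\chi_m(f)$) and the $\boldsymbol{\rho}$-constraint at the edge $\rho_k(f)$. Thus the constraints form a bipartite $m$-regular multigraph on $2\binom{n}{2}$ vertices, and the correlation between the two halves of the joint event is controlled by short cycles and repeated edges in this multigraph, i.e.\ by coincidences such as $\rho_k(f)=f$ or $\rho_k(f)=\rho_j(f)$. For a \emph{generic} $\boldsymbol{\rho}$ --- one whose $\rho_k$ fix few edges and exhibit few small joint cycles --- the two constraint systems asymptotically decouple and $p(\boldsymbol{\rho}) = (1+o(1))\,E_{n,m,r}^2$, producing the expected main term $(1+o(1))\, n!^{m} E_{n,m,r}^2$.

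I would then stratify the sum $\sum_{\boldsymbol{\rho}}$ by a combinatorial invariant measuring this overlap --- for instance, the number of edges $f$ that are fixed by some $\rho_k$ with $k\ge 2$, or the joint cycle structure of the $\rho_k$'s acting on $\binom{[n]}{2}$ --- and bound each stratum. Generic $\boldsymbol{\rho}$ yield the main term; exceptional $\boldsymbol{\rho}$ (with unusually many fixed edges or small joint cycles) are sufficiently rare that even crude upper bounds on $p(\boldsymbol{\rho})$ (e.g.\ $p(\boldsymbol{\rho}) \leq E_{n,m,r}$) suffice to show their total contribution is $o(n!^m E_{n,m,r}^2)$. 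The simplest instance of this, $\boldsymbol{\rho} = (\id,\ldots,\id)$, contributes $E_{n,m,r}$, which is $o(n!^m E_{n,m,r}^2)$ precisely when $\mathbb{E}[Z] \to \infty$.

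The principal obstacle is the sharpness of the threshold. The correction terms $\tfrac{2m-1}{3}$ and $\tfrac{m}{2\log n}$ in \eqref{eq:main2} arise from the second-order Taylor expansion $-\log(1-i/r) = i/r + i^2/(2r^2) + \cdots$ in $\log E_{n,m,r}$, and the fact that $\tfrac{2m-1}{3}$ is exactly twice the corresponding first-moment correction (from requiring $\mathbb{E}[Z]\to\infty$) signals that the second-moment ratio must be forced to converge to exactly $1$, not merely to a constant. Accomplishing this requires $p(\boldsymbol{\rho})$ for generic $\boldsymbol{\rho}$ to be computed to subconstant precision --- in particular, a precise Taylor expansion rather than an order-of-magnitude estimate --- and the combinatorial enumeration of exceptional $\boldsymbol{\rho}$ must be done sharply enough that the slack in \eqref{eq:main2} suffices to absorb it. This precision is where the main technical work of the proof should lie.
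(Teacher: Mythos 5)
Your overall strategy---the second-moment method, the symmetry reduction to $\mathbb{E}[Z^2]=n!^m\sum_{\boldsymbol{\rho}}p(\boldsymbol{\rho})$, and a stratification of this sum by the amount of overlap between the two constraint systems---is exactly the paper's, and your closing diagnosis of where the precision is needed is accurate. But the proposal stops precisely where the proof begins, and one of its concrete suggestions would fail. The dichotomy you propose (generic $\boldsymbol{\rho}$ contribute $(1+o(1))E_{n,m,r}^2$; exceptional $\boldsymbol{\rho}$ are rare enough that the crude bound $p(\boldsymbol{\rho})\leq E_{n,m,r}$ suffices) breaks down at the critical stratum. Take $\rho_1=\cdots=\rho_m$ arbitrary: there are $n!$ such tuples, each with $p(\boldsymbol{\rho})=E_{n,m,r}$ exactly, and $E_{n,m,r}\approx E_{n,m,r}^2\,n!^{\,m-1}$ up to corrections of constant order; so the ratio of their total contribution to the main term $n!^{2m}E_{n,m,r}^2$ is decided entirely by the lower-order terms $\frac{2m-1}{3}+\frac{m}{2\log n}$ in \eqref{eq:main2}. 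No crude bound can handle this, and the interpolating strata (permutations agreeing pairwise on $f$ edges with $\log n\ll f\ll n$) require an upper bound on $p(\boldsymbol{\rho})$ that degrades continuously and sharply with the overlap. The paper's \cref{lem:entropy-bound} supplies exactly this: $p(\boldsymbol{\rho})\leq(1+o_m(1))E_{n,m,r}^2e^{\wt(\boldsymbol{\rho})/(r-(2m-1)/3)}$, where $\wt$ counts the pairwise coincidences $\rho_k(e)=\rho_{k'}(e)$ (these pairwise agreements, not single fixed edges $\rho_k(e)=e$, are the right invariant). Obtaining the constant $(2m-1)/3$, rather than the $2m-2$ a naive greedy colour count gives, requires an entropy argument averaged over all $m!$ orders of revealing the colour classes; this is the first missing idea.

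The second missing component is the enumeration of the exceptional strata, which is not a routine rarity estimate. The $\binom{m}{2}$ pairwise overlap statistics are highly dependent, so one cannot multiply per-pair probabilities; the paper extracts a spanning tree of $m-1$ pairs chosen greedily by decreasing weight, expresses each overlap as $\binom{f}{2}+t$ via fixed points and $2$-cycles of $\rho_k^{-1}\rho_{k'}$, bounds the number of tuples with prescribed $(f_\ell,t_\ell)$ by $n!^m\prod_\ell(f_\ell!\,2^{t_\ell}t_\ell!)^{-1}$, and then carries out a genuine optimization (a Gamma-function inequality plus concavity of $f\mapsto-\log\Gamma(f+1)+q\binom{f}{2}$) showing that the maximum over all strata is attained at the endpoints $f_1=\log n$ or $f_1=n$, the latter being the borderline case above. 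Without these two components the outline is a correct plan but not a proof. (You also do not address the first half of the theorem---nonexistence under \eqref{eq:main1}---though that is the routine first-moment computation with Markov applied to $Z/n!$.)
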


\section{Existence of Rainbow Stackings}
We will use the second-moment method to prove the second statement of \cref{thm:main}.  We already computed the first moment of $Z$ in \cref{sec:intro}.  The second moment of $Z$ is
$$\mathbb{E}[Z^2]=\mathbb{E}\left[ \left(\sum_{\boldsymbol{\sigma} \in \mathfrak{S}_n^m} Z_{\boldsymbol{\sigma}} \right)^2 \right]=\sum_{\boldsymbol{\sigma},\boldsymbol{\tau} \in \mathfrak{S}_n^m} \mathbb{E}[Z_{\boldsymbol{\sigma}}Z_{\boldsymbol{\tau}}].$$
For each $k \in [m]$, define the new coloring $\chi'_k:\binom{[n]}{2} \to \mathcal{C}_r$ by $\chi'_k(e)\coloneq\chi_k(\sigma_k^{-1}(e))$.   Now, $Z_{\boldsymbol{\sigma}}Z_{\boldsymbol{\tau}}$ is the indicator function of the event that for each $e \in \binom{[n]}{2}$, the colors $$\chi'_1(e), \ldots, \chi'_m(e)$$ are all distinct and the colors
$$\chi'_1(\sigma_1\tau_1^{-1}(e)), \ldots, \chi'_m(\sigma_m\tau_m^{-1} (e))$$
are all distinct.  Hence, $Z_{\boldsymbol{\sigma}}Z_{\boldsymbol{\tau}}$ has the same distribution as $Z_{\boldsymbol{\id}}Z_{(\sigma_1\tau_1^{-1}, \ldots, \sigma_m\tau_m^{-1})}$, where we write ${\boldsymbol{\id}=(\id,\ldots,\id)}$ for the tuple in $\mathfrak S_n^m$ whose components are all equal to the identity permutation $\id\in\mathfrak S_n$. Consequently,
$$\mathbb{E}[Z^2]=n!^m\sum_{\bpi \in \mathfrak{S}_n^m} \mathbb{E}[Z_{\boldsymbol{\id}} Z_{\bpi}].$$
We derive an explicit formula for each $[Z_{\boldsymbol{\id}} Z_{\bpi}]$ as follows.

For each $e\in\binom{[n]}{2}$, let $\beta_1(e),\ldots,\beta_m(e)$ be $m$ copies of $e$. Consider the $m$-partite graph $G_{\bpi}$ with vertex set $V(G_{\bpi})=\{\beta_k(e):e\in\binom{[n]}{2},\,k\in[m]\}$ in which $\beta_k(e)$ is adjacent to $\beta_{k'}(e')$ if and only if $k\neq k'$ and either $e=e'$ or $\pi_k(e)=\pi_{k'}(e')$.  The edge-colorings $\chi_1,\ldots,\chi_m$ naturally induce a vertex-coloring of $G_{\bpi}$, where $\beta_k(e)$ is assigned the color $\chi_k(e)$. Observe that $\boldsymbol{\id}$ and $\bpi$ are both rainbow stackings of $\chi_1, \ldots, \chi_m$ if and only if the induced vertex-coloring of $G_{\bpi}$ is proper. For example, if $\bpi$ and $\chi_1,\chi_2,\chi_3$ are as depicted in \cref{fig:rainbow}, then $G_{\bpi}$ and its induced coloring are shown in \cref{fig:Gpi}. Although $\bpi$ is a rainbow stacking of $\chi_1,\chi_2,\chi_3$, the identity tuple $\boldsymbol{\id}$ is not; this is why there are pink edges in \cref{fig:Gpi} whose endpoints have the same color. 

The quantity $\mathbb{E}[Z_{\boldsymbol{\id}} Z_{\bpi}]$ is equal to $r^{-m\binom{n}{2}}N_{\bpi}$, where $N_{\bpi}$ is the number of proper $r$-vertex-colorings of $G_{\bpi}$. Hence, we will study how $N_{\bpi}$ depends on $\bpi$. 

\begin{figure}[]
\begin{center}{\includegraphics[height=12.958cm]{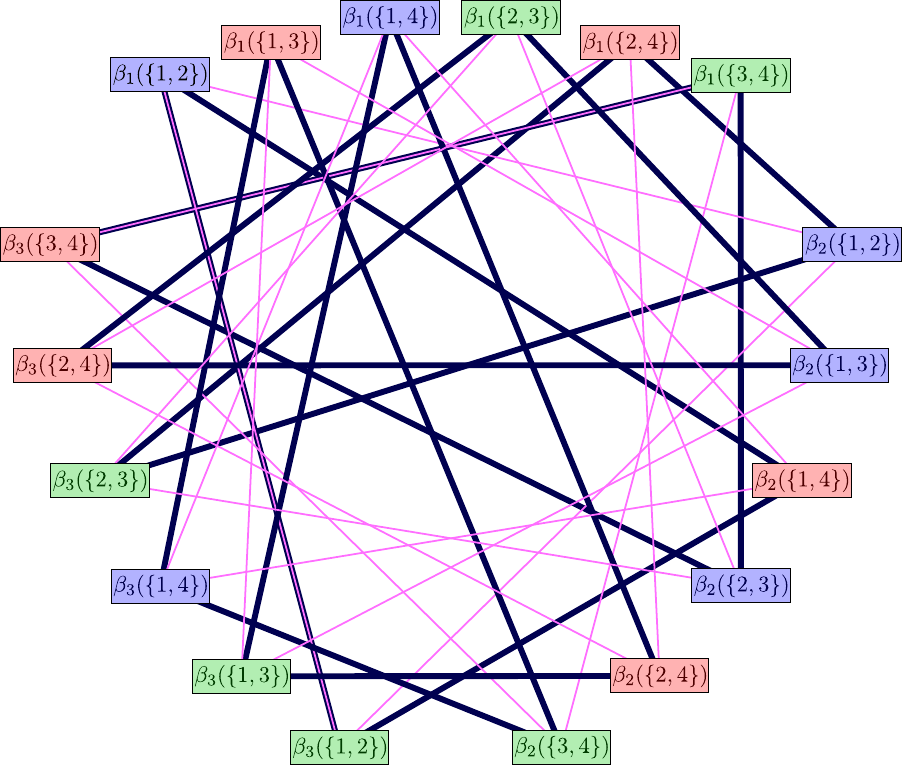}}
\end{center}
\caption{The graph $G_{\bpi}$, where $\bpi=(1234,2431,1243)$ is as depicted in \cref{fig:rainbow}. Edges of $G_{\bpi}$ of the form $\{\beta_k(e),\beta_{k'}(e)\}$ are drawn in {\color{Pink}thin pink}, while those of the form $\{\beta_k(e),\beta_{k'}(e')\}$ with $\pi_k(e)=\pi_{k'}(e')$ are drawn in {\color{Navy}\textbf{thick navy}}. Edges of the form $\{\beta_k(e),\beta_{k'}(e)\}$ with $\pi_k(e)=\pi_{k'}(e)$ are drawn with \includegraphics[height=0.2802cm]{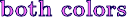}.  The vertex-coloring of $G_{\bpi}$ is induced by the edge-colorings $\chi_1,\chi_2,\chi_3$ in \cref{fig:rainbow}.}
\label{fig:Gpi}
\end{figure}

The graph $G_{\bpi}$ has $m \binom{n}{2}$ vertices and $$2\binom{m}{2}\binom{n}{2}-\left|\left\{(k,k',e) \in [m] \times [m] \times \binom{[n]}{2}: k<k' \text{ and } \pi_k(e)=\pi_{k'}(e) \right\}\right|$$
edges.  For each $k<k'$, we have
$$\left|\left\{e \in \binom{[n]}{2}:\pi_k(e)=\pi_{k'}(e)\right\}\right|=\binom{f_{k,k'}(\bpi)}{2}+t_{k,k'}(\bpi),$$
where $f_{k,k'}(\bpi)$ and $t_{k,k'}(\bpi)$ denote the number of fixed points and the number of $2$-cycles (respectively) of $\pi_k^{-1} \pi_{k'}$ (viewed as a permutation of $[n]$).  Define the weight $$\wt(\bpi)\coloneq\sum_{1 \leq k<k' \leq m}\wt_{k,k'}(\bpi),$$ where 
\[\wt_{k,k'}(\bpi)\coloneq\binom{f_{k,k'}(\bpi)}{2}+t_{k,k'}(\bpi).\]
Then the number of edges of $G_{\bpi}$ is
$$2\binom{m}{2}\binom{n}{2}-\wt(\bpi).$$
The following lemma provides an upper bound on $N_{\bpi}$ in terms of the weight $\wt(\bpi)$.

\begin{lemma}\label{lem:entropy-bound}
Let $m \geq 2$ be an integer. If $\bpi \in \mathfrak{S}_n^m$ and $r=r(n)$ satisfies $n^2/r^3=o(1)$, then
\[N_{\bpi} \leq (1+o_m(1))r^{m\binom{n}{2}} E_{n,m,r}^2 e^{\wt(\bpi)/(r-(2m-1)/3)}.\] 
\end{lemma}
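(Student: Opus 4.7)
The plan is to decompose $N_{\bpi}$ through the pink/navy $K_m$-clique structure of $G_{\bpi}$ and then apply a calculus estimate. Observe that $G_{\bpi}$ is the union of $\binom{n}{2}$ pink cliques $P_e = \{\beta_k(e) : k \in [m]\}$ (one per $e \in \binom{[n]}{2}$) and $\binom{n}{2}$ navy cliques $N_f = \{\beta_k(\pi_k^{-1}(f)) : k \in [m]\}$ (one per $f$), with every vertex in exactly one of each. Since the pink cliques are vertex-disjoint, the number of colorings for which every pink clique is rainbow is $(r(r-1)\cdots(r-m+1))^{\binom{n}{2}} = r^{m\binom{n}{2}}E_{n,m,r}$. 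Writing $N_{\bpi} = r^{m\binom{n}{2}}E_{n,m,r}\cdot Q_{\bpi}$ with $Q_{\bpi}$ the probability that a uniformly random pink-rainbow coloring is also navy-rainbow, it suffices to prove $Q_{\bpi} \leq (1+o_m(1))E_{n,m,r}\cdot e^{\wt(\bpi)/(r-(2m-1)/3)}$. Under the pink-rainbow distribution, the tuples $\sigma_e = (\chi(\beta_1(e)), \ldots, \chi(\beta_m(e)))$ are independent across $e$, each uniform on ordered distinct $m$-tuples from $\mathcal{C}_r$.

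For a single navy clique $f$, partition $[m]$ by the equivalence $k \sim k' \iff \pi_k^{-1}(f) = \pi_{k'}^{-1}(f)$, and let $s_1^{(f)}, \ldots, s_{p(f)}^{(f)}$ be the resulting group sizes. A direct calculation gives
\[
\mathbb{P}\bigl(N_f \text{ rainbow} \,\big|\, \text{pink rainbow}\bigr) = \frac{\prod_{i=0}^{m-1}(r-i)}{\prod_j \prod_{i=0}^{s_j^{(f)}-1}(r-i)},
\]
since within each group the relevant coordinates come from a single pink tuple $\sigma_e$ and are distinct by pink rainbow, so all that remains is for the groups' color sets to be pairwise disjoint. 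The key identity $\sum_f \sum_j \binom{s_j^{(f)}}{2} = \wt(\bpi)$ matches the weight notation, because $\binom{s_j^{(f)}}{2}$ counts pairs $(k,k')$ with $k<k'$ satisfying $\pi_k(e) = \pi_{k'}(e) = f$ for the unique such $e$.

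The main technical step is the sub-multiplicative bound $Q_{\bpi} \leq (1+o_m(1))\prod_f \mathbb{P}(N_f \text{ rainbow} \mid \text{pink rainbow})$, and I expect this to be the main obstacle. For $m = 2$, the bipartite graph between classes $1$ and $2$ decomposes into cycles $C_{2\ell}$ and ``double edges'' (one per shared $e$ with $\pi_1(e) = \pi_2(e)$), so the chromatic polynomial factors as a product of $r(r-1)$ and $\chi(C_{2\ell}; r) = (r-1)^{2\ell} + (r-1)$; bounding the latter by $(r-1)^{2\ell}\bigl(1 + (r-1)^{-(2\ell-1)}\bigr)$ introduces a total multiplicative correction of at most $e^{O(n^2/r^3)} = 1+o_m(1)$ under the hypothesis $n^2/r^3 = o(1)$. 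For general $m$, the natural approach is to reveal the $\sigma_e$'s one pink clique at a time and bound the number of valid distinct $m$-tuple extensions given navy constraints to previously revealed vertices, with the cumulative product matching the marginal bound up to an analogous $o_m(1)$ error coming from the rare ``small-cycle'' configurations.

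Granting this sub-multiplicative bound, what remains is calculus. The estimate
\[
\sum_{i=0}^{s-1}\log(1 - i/r) \geq -\binom{s}{2}\bigm/\bigl(r - (2m-1)/3\bigr)
\]
for $s \leq m$ and $r$ large in $m$ follows from Taylor expansion: the order-$r^{-2}$ term of $-\sum_{i=0}^{s-1}\log(1-i/r)$ equals $\binom{s}{2}(2s-1)/(6r^2)$, and the denominator $r-(2m-1)/3$ provides the required slack $(2m-1)/3 \geq (2s-1)/3 > (2s-1)/6$ at this order, with higher-order contributions absorbed into the $(1+o_m(1))$ factor by $n^2/r^3 = o(1)$. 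Summing these bounds over navy cliques and groups and invoking the weight identity yields
\[
\prod_f \mathbb{P}(N_f \text{ rainbow} \mid \text{pink rainbow}) \leq (1+o_m(1))E_{n,m,r}\cdot e^{\wt(\bpi)/(r-(2m-1)/3)},
\]
and multiplying through by $r^{m\binom{n}{2}}E_{n,m,r}$ gives the claimed bound on $N_{\bpi}$.
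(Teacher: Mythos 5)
Your decomposition of $G_{\bpi}$ into pink cliques $P_e$ and navy cliques $N_f$, the exact formula for $\mathbb{P}(N_f\text{ rainbow}\mid\text{pink rainbow})$, the identity $\sum_f\sum_j\binom{s_j^{(f)}}{2}=\wt(\bpi)$, and the Taylor estimate showing that the shift $(2m-1)/3$ absorbs the order-$r^{-2}$ terms are all correct, and your $m=2$ argument via chromatic polynomials of even cycles is complete (the paper itself remarks that this works for $m=2$). But the step you flag as the main obstacle is a genuine gap, and it is essentially the entire content of the lemma. The sub-multiplicative bound $Q_{\bpi}\leq(1+o_m(1))\prod_f\mathbb{P}(N_f\text{ rainbow}\mid\text{pink rainbow})$ is asserted, not proved, and your proposed route for $m\geq 3$ underestimates the difficulty: the error does \emph{not} come only from rare small-cycle configurations. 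If you reveal the tuples $\sigma_e$ one pink clique at a time and condition on no navy violation so far, the factor contributed by $\sigma_e$ is a ratio of the form $\#\{\text{distinct }m\text{-tuples with coordinate }k\text{ avoiding a list }A_k\}/\bigl(r(r-1)\cdots(r-m+1)\bigr)$, and this differs from the corresponding product of marginal contributions at order $1/r^2$ for \emph{every} pink clique (through the inclusion--exclusion for distinctness, the overlaps $|A_k\cap A_{k'}|$, and the tilting of the revealed colors by the conditioning). Since $r\asymp n/\log n$, we have $n^2/r^2\to\infty$, so an uncontrolled $O_m(1/r^2)$ discrepancy per pink clique would blow up to $e^{\Theta((\log n)^2)}$; one must show these second-order terms cancel or point in the favorable direction, which is delicate bookkeeping comparable to proving the lemma from scratch. (The hypothesis $n^2/r^3=o(1)$ only lets you discard \emph{third}-order errors.)

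For comparison, the paper avoids this correlation problem entirely by using the entropy method: it takes a uniformly random proper coloring of $G_{\bpi}$, reveals the $m$ layers $\{\beta_{\sigma(k)}(e)\}_e$ in an order given by a uniformly random $\sigma\in\mathfrak{S}_m$, bounds each conditional entropy $H(\chi(\beta_{\sigma(k)}(e))\mid\chi^{\sigma}_{<k})$ in terms of the expected number of color repetitions among the at most $2(k-1)$ previously revealed neighbors, and then averages over $\sigma$. The second-order terms are tracked exactly via the identity $r-2(k-1)+(k-1)^2/r=r(1-(k-1)/r)^2$, which is what produces the factor $E_{n,m,r}^2$, and the averaging over $\sigma$ is what produces the constant $(2m-1)/3$ (as $\sum_{j=2}^m (j-1)^2/\binom{m}{2}$). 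Your calculus step recovers the same constant from the group sizes $s_j^{(f)}$, which is reassuring, but without a proof of the sub-multiplicativity the argument for general $m$ is incomplete.
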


\begin{proof}
We use the entropy method.  Let $\chi: V(G_{\bpi}) \to \mathcal{C}_r$ be a uniformly random proper $r$-coloring.  Then the entropy of $\chi$ is
$$H(\chi)=\log(N_{\bpi}),$$
where $H(\cdot)$ denotes the base-$e$ entropy function.  Let $\sigma \in \mathfrak{S}_m$ be a permutation.  We will reveal the values of $\chi$ on the vertices $\beta_{\sigma(1)}(e)$, then the vertices $\beta_{\sigma(2)}(e)$, and so on until the vertices $\beta_{\sigma(m)}(e)$.  For each stage, let $\chi^\sigma_{<k}$ denote the partial coloring on the vertices $\beta_{\sigma(k')}(e)$ for $k'<k$ and $e \in \binom{[n]}{2}$.  Then the chain rule and the subadditivity of entropy give that
$$H(\chi) \leq \sum_{k=1}^m \sum_{e \in \binom{[n]}{2}} H(\chi(\beta_{\sigma(k)}(e))\mid \chi^\sigma_{<k}).$$
We will estimate the summands appearing on the right-hand side of this inequality individually.

For each color $c\in\mathcal C_r$, each vertex $\beta_\ell(e)$, and each partial proper coloring $\chi'$ of the other vertices of $G_{\bpi}$, we have 
\begin{equation}\label{eq:claim}
\mathbb P[\chi(\beta_\ell(e))=c\mid \text{$\chi$ and $\chi'$ agree wherever $\chi'$ is defined}] \leq 1/(r-2m+2).  
\end{equation}
Indeed, because $\beta_\ell(e)$ has at most $2m-2$ neighbors, there are at most $2m-2$ forbidden values of $\chi(\beta_\ell(e))$; since the remaining colors are equally likely, each one occurs with probability at most $1/(r-2m+2)$.

Now, consider a single permutation $\sigma$ and a single vertex $\beta_{\sigma(k)}(e)$.  Let $y=y^\sigma_k(e)$ be such that $2(k-1)-y$ is the number of distinct colors already assigned by $\chi$ to the neighbors of $\beta_{\sigma(k)}(e)$ that are of the form $\beta_{\sigma(k')}(e')$ with $k'<k$.  Then there are at most $r-2(k-1)+y$ possibilities for $\chi(\beta_{\sigma(k)}(e))$, and the entropy of $\chi(\beta_{\sigma(k)}(e))$ conditioned on this partial coloring is at most
$$\log(r-2(k-1)+y)=\log(r-2(k-1))+\log \left(1+\frac{y}{r-2(k-1)} \right)\leq\log(r-2(k-1))+\frac{y}{r-2(k-1)}.$$
Summing over all of the possibilities for the partial coloring $\chi^\sigma_{<k}$,
we find that
\begin{equation}\label{eq:entropy-in-terms-of-y}
H(\chi(\beta_{\sigma(k)}(e))\mid \chi^\sigma_{<k}) \leq \log(r-2(k-1))+\mathbb{E}[y]\frac{1}{r-2(k-1)}.
\end{equation}
The next task is estimating $\mathbb{E}[y]$.

For each triple $(\ell,\ell',e)$ with $\ell< \ell'$ and $e \in \binom{[n]}{2}$, let \[x(\ell,\ell',e)=\begin{cases} 1 & \mbox{if }\pi_\ell(e)=\pi_{\ell'}(e); \\   0 & \mbox{otherwise}.\end{cases}\]  
We record for future reference the identity
\begin{equation}\label{eq:-sum-of-x's}
\sum_{\ell<\ell'} \sum_{e \in \binom{[n]}{2}} x(\ell,\ell',e)=\wt(\bpi).
\end{equation}
The neighbors of $\beta_{\sigma(k)}(e)$ already colored by $\chi^\sigma_{<k}$ are the vertices $\beta_{\sigma(k')}(e)$ and $\beta_{\sigma(k')}(\pi_{\sigma(k')}^{-1}\pi_{\sigma(k)}(e))$ for $k'<k$.  Counting collisions, we find that the number of such vertices is $$2(k-1)-\sum_{k'<k}x(\sigma(k),\sigma(k'),e).$$ 
The vertices $\beta_{\sigma(k')}(e)$ for $k'<k$ form a clique in $G_{\bpi}$; likewise, the vertices $\beta_{\sigma(k')}(\pi_{\sigma(k')}^{-1}\pi_{\sigma(k)}(e))$ for $k'<k$ form a clique in $G_{\bpi}$.  So the pairs of such vertices receiving the same color form a matching, and the number of such pairs is at least $y-\sum_{k'<k}x(\sigma(k),\sigma(k'),e)$.  Each pair of vertices $\beta_{\sigma(k'')}(e), \beta_{\sigma(k')}(\pi_{\sigma(k')}^{-1}\pi_{\sigma(k)}(e))$, for ${k',k''<k}$, receives the same color with probability at most $1/(r-2m+2)$ by \eqref{eq:claim}, so 
\begin{align*}
\mathbb{E}[y] &\leq \frac{(k-1-\sum_{k'<k}x(\sigma(k),\sigma(k'),e))^2}{r-2m+2}+\sum_{k'<k}x(\sigma(k),\sigma(k'),e)\\
 &\leq \frac{(k-1)(k-1-\sum_{k'<k}x(\sigma(k),\sigma(k'),e))}{r-2m+2}+\sum_{k'<k}x(\sigma(k),\sigma(k'),e)\\
&= \frac{(k-1)^2}{r-2m+2}+\left(1-\frac{k-1}{r-2m+2} \right)\sum_{k'<k}x(\sigma(k),\sigma(k'),e)\\
 &=\frac{(k-1)^2}{r-2m+2}+\left(\frac{r-2(k-1)}{r-k+1}+O_m(1/r^2) \right)\sum_{k'<k}x(\sigma(k),\sigma(k'),e).
\end{align*}
Substituting this into \eqref{eq:entropy-in-terms-of-y}, summing over $k$ and $e$, and averaging over $\sigma \in \mathfrak{S}_m$ gives that 
\begin{equation}\label{eq:entropy-big}
H(\chi)\leq \mathop{\mathbb{E}}_{\sigma \in \mathfrak{S}_m} \sum_{k=1}^m \sum_{e \in \binom{[n]}{2}} \left(\Omega_{k}+\Psi_{k,e}^\sigma+O_m(1/r^3)\right),
\end{equation}
where 
\[\Omega_k=\log(r-2(k-1))+\frac{(k-1)^2}{(r-2m+2)(r-2(k-1))}\quad\text{and}\quad\Psi_{k,e}^\sigma=\frac{\sum_{k'<k}x(\sigma(k),\sigma(k'),e)}{r-k+1}.\]
We compute that
\begin{align*}
\Omega_k &=\log(r-2(k-1))+\frac{(k-1)^2/r}{r-2(k-1)}+O_m(1/r^3)\\
 &=\log(r-2(k-1))+\log\left(1+\frac{(k-1)^2/r}{r-2(k-1)}\right)+O_m(1/r^3)\\
 &=\log(r-2(k-1)+(k-1)^2/r)+O_m(1/r^3)\\
 &=\log r+2\log(1-(k-1)/r)+O_m(1/r^3);
\end{align*}
the crucial point is the identity $r-2(k-1)+(k-1)^2/r=r(1-(k-1)/r)^2$.  Consequently,  
\begin{align*}
\mathop{\mathbb{E}}_{\sigma \in \mathfrak{S}_m} \sum_{k=1}^m \sum_{e \in \binom{[n]}{2}} \Omega_k &=m \binom{n}{2}\log r+2\binom{n}{2} \sum_{k=1}^m \log(1-(k-1)/r)+O_m(n^2/r^3) \\ 
&=m \binom{n}{2}\log r+2 \log(E_{n,m,r})+o_m(1).
\end{align*}
Next, using \eqref{eq:-sum-of-x's}, the formula for the sum of the first $m-1$ squares, and the hypothesis that $n^2/r^3=o(1)$, we find that
\begin{align*}
\mathop{\mathbb{E}}_{\sigma \in \mathfrak{S}_m} \sum_{k=1}^m \sum_{e \in \binom{[n]}{2}}\Psi_{k,e}^\sigma &=\sum_{\ell<\ell'} \sum_e x(\ell,\ell',e) \mathop{\mathbb{E}}_{\sigma \in \mathfrak{S}_m} \frac{1}{r-\max\{\sigma^{-1}(\ell),\sigma^{-1}(\ell')\}+1}\\
 &=\wt(\bpi) \mathop{\mathbb{E}}_{\sigma \in \mathfrak{S}_m} \frac{1}{r-\max\{\sigma^{-1}(1),\sigma^{-1}(2)\}+1}\\
 &=\wt(\bpi) \sum_{j=2}^m \frac{(j-1)/\binom{m}{2}}{r-j+1}\\
 &=\wt(\bpi) \left(\frac{1}{r}+ \sum_{j=2}^m \frac{(j-1)^2/\binom{m}{2}}{r^2} +O_m(1/r^3)\right)\\
 &=\wt(\bpi) \left(\frac{1}{r}+\frac{2m-1}{3r^2} \right)+O_m(n^2/r^3)\\
 &=\frac{\wt(\bpi)}{r-(2m-1)/3} +o_m(1).
\end{align*}
Substituting these bounds back into \eqref{eq:entropy-big}, we conclude that
$$N_{\bpi} \leq (1+o_m(1))r^{m\binom{n}{2}} E_{n,m,r}^{2}e^{\wt(\bpi)/(r-(2m-1)/3)},$$
as desired.
\end{proof}

For convenience, let 
\[
\widehat{r}=r-(2m-1)/3.
\]
If $r$ satisfies \eqref{eq:main2}, then it follows immediately from \cref{lem:entropy-bound} that
\begin{equation}\label{eq:N_pi-bound}
\mathbb{E}[Z_{\boldsymbol{\id}}Z_{\bpi}] \leq (1+o_m(1))E_{n,m,r}^2 e^{\wt(\bpi)/\widehat{r}}
\end{equation}
for each $\bpi \in \mathfrak{S}_n^m$, so
\begin{equation}\label{eq:reduction-to-gamma}
\mathbb{E}[Z^2] \leq (1+o_m(1))E_{n,m,r}^2 n!^m \sum_{\bpi \in \mathfrak{S}_n^{m}} e^{\wt(\bpi)/\widehat{r}}.
\end{equation}
Our goal is to obtain an upper bound on the sum on the right-hand side of \eqref{eq:reduction-to-gamma}.  The following proposition captures the central estimate of the proof.

\begin{proposition}\label{prop:main-estimate}
If $r$ satisfies \eqref{eq:main2},  
then
$$\sum_{\bpi \in \mathfrak{S}_n^{m}} e^{\wt(\bpi)/\widehat{r}}=n!^m(1+o(1)).$$
\end{proposition}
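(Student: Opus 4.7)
The plan is to reduce the sum to a weighted enumeration over ``coincidence patterns'' between the permutations $\pi_k$ and then bound each contribution. Writing $\wt(\bpi) = \sum_{k<k'}\sum_{e\in\binom{[n]}{2}} \mathbb{1}[\pi_k(e)=\pi_{k'}(e)]$ as a sum of $\{0,1\}$-valued indicators and using the identity $(e^{1/\widehat{r}})^Y = 1+\alpha Y$ for $Y\in\{0,1\}$ with $\alpha \coloneq e^{1/\widehat{r}}-1 \sim 1/\widehat{r}$, one gets
\[
\sum_{\bpi\in\mathfrak{S}_n^m} e^{\wt(\bpi)/\widehat{r}} \;=\; \sum_T \alpha^{|T|}\,N(T),
\]
where $T$ ranges over subsets of the triples $(k,k',e)$ with $1\le k<k'\le m$ and $e\in\binom{[n]}{2}$, and $N(T)\coloneq|\{\bpi\in\mathfrak{S}_n^m:\pi_k(e)=\pi_{k'}(e)\text{ for every }(k,k',e)\in T\}|$. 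The empty-$T$ term contributes exactly $n!^m$, so it suffices to show that the remaining contributions sum to $o(n!^m)$.

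The heart of the argument is a structural upper bound on $N(T)$. For each edge $e\in\binom{[n]}{2}$, the triples of $T$ with third coordinate $e$ induce a graph $G_e(T)$ on the vertex set $[m]$ (with edges $\{k,k'\}$ for $(k,k',e)\in T$), and the constraints force $\pi_k(e)=\pi_{k'}(e)$ whenever $k,k'$ lie in the same connected component of $G_e(T)$. I would count $\bpi$ by processing $\pi_1,\pi_2,\ldots,\pi_m$ sequentially: given $\pi_1,\ldots,\pi_{k-1}$, the constraints prescribe the values of $\pi_k$ on certain $2$-subsets (and, when a component of some $G_e(T)$ has three or more vertices, even force $\pi_k$ to fix certain points of $[n]$). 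A careful count of permutations with prescribed edge-images then yields a bound of the shape
\[
N(T)/n!^m \;\le\; C^{|T|}\,\binom{n}{2}^{-s(T)},
\]
where $s(T)\coloneq\sum_e\bigl(|V^\ast(G_e(T))|-c^\ast(G_e(T))\bigr)$ is the total number of spanning-forest edges of the $G_e(T)$'s and measures the number of ``genuinely independent'' coincidences in $T$ (non-tree edges within a component contribute no extra rigidity).

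To conclude, organize $\sum_{T\neq\emptyset}\alpha^{|T|}N(T)$ by the isomorphism type of the family $(G_e(T))_e$ together with the choice of edges of $K_n$ supporting each $G_e(T)$. For each such pattern the number of realizing $T$'s is a polynomial in $n$ whose degree roughly equals the number of constrained edges, so after multiplying by $\alpha^{|T|}\binom{n}{2}^{-s(T)}$ the contribution decays geometrically once one goes past a bounded number of coincidences; the hypothesis \eqref{eq:main2} is precisely what forces the resulting geometric sum to be $o(1)$. The extremal pattern forces $\pi_1=\pi_2=\cdots=\pi_m$ (so $\wt(\bpi)=\binom{m}{2}\binom{n}{2}$), and it is exactly this case for which \eqref{eq:main2} is tight; the $m/(2\log n)+\omega(n)/(\log n)^2$ slack in the hypothesis supplies the required $o(1)$ margin.

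The principal obstacle is the constraint-accounting step. The bound on $N(T)$ must be uniform across the zoo of possible structures, because patterns that concentrate many coincidences on a few edges behave very differently from those that spread coincidences across many edges. Making the combinatorial bound sharp enough in every regime—so that, after multiplying by the number of embeddings and summing, one truly lands at $o(n!^m)$ right at the threshold \eqref{eq:main2}—is the main technical hurdle.
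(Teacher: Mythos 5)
Your opening transformation is valid: since each indicator $x(k,k',e)\in\{0,1\}$, the identity $e^{x/\widehat r}=1+(e^{1/\widehat r}-1)x$ holds, so expanding the product over triples and summing over $\bpi$ does give $\sum_T\alpha^{|T|}N(T)$ exactly. However, the bound you propose for $N(T)$ is false, and the failure occurs precisely in the regime where the estimate matters most.

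Take $T$ to consist of all triples $(1,2,e)$ with $e\in\binom{S}{2}$ for a fixed $f$-element set $S\subseteq[n]$ with $f\geq 3$. Each $G_e(T)$ is the single edge $\{1,2\}$ on $[m]$, so $s(T)=|T|=\binom f2$, and your bound asserts $N(T)/n!^m\leq C^{\binom f2}\binom n2^{-\binom f2}\approx(C/n^2)^{\binom f2}$. But the constraint is that $\tau\coloneq\pi_1^{-1}\pi_2$ fixes every $2$-subset of $S$, which for $f\geq 3$ forces $\tau$ to fix $S$ pointwise; hence $N(T)=n!^{m-1}\,(n-f)!$ and $N(T)/n!^m\approx n^{-f}$. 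This is vastly larger than $(C/n^2)^{\binom f2}$ once $f\geq 3$, and the discrepancy grows worse as $f$ increases. (At the other extreme, for the full $T$ forcing $\pi_1=\cdots=\pi_m$, your bound predicts $N(T)/n!^m\approx e^{-(m-1)n^2\log n}$ while the truth is $n!^{-(m-1)}\approx e^{-(m-1)n\log n}$.) The reason is that your $s(T)$ only detects redundancy \emph{within} each $G_e(T)$ (which lives on $[m]$) and is blind to the fact that constraints on overlapping edges of $K_n$ are far from independent: fixing all edges of a clique of $K_n$ costs only about one unit of ``rigidity'' per \emph{vertex}, not per \emph{edge}. Your own closing paragraph flags the constraint-accounting step as the main hurdle, but the specific bound you write down does not survive it, and without a replacement the geometric-sum argument collapses exactly at the extremal pattern you identify as governing the threshold.

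The paper sidesteps this by never expanding the exponential. It instead records, for each pair $(k,k')$, the number of fixed points $f_{k,k'}$ and $2$-cycles $t_{k,k'}$ of $\pi_k^{-1}\pi_{k'}$ (so that $\wt_{k,k'}=\binom{f_{k,k'}}2+t_{k,k'}$), bounds the number of permutations with given $(f,t)$ by $n!/(f!\,2^t t!)$, and reduces the $\binom m2$ pairs to $m-1$ independent ones via a spanning tree on $[m]$ chosen greedily by decreasing weight. This $(f,t)$-bookkeeping is exactly what captures the vertex-sharing phenomenon your bound misses. To salvage your approach you would need a version of $s(T)$ that, for each pair $(k,k')$, counts something like the number of non-isolated vertices minus components in the graph on $[n]$ whose edges are $\{e:(k,k',e)\in T\}$; but at that point you have essentially re-derived the paper's $(f,t)$ accounting in a less tractable form.
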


\cref{prop:main-estimate} tells us that if $r$ satisfies \eqref{eq:main2}, then 
$$\mathbb{E}[Z^2]=(1+o_m(1))(E_{n,m,r}n!^m)^2=(1+o_m(1)) \mathbb{E}[Z]^2$$
(using \eqref{eq:first-moment-expression} for the second equality), so $\Var(Z)=o_m(\mathbb{E}[Z])$. Then Chebyshev's Inequality gives that
$$\mathbb{P}[Z=0]=o_m(1),$$
which proves the second part of \cref{thm:main}. Thus, the remainder of this section will be devoted to proving \cref{prop:main-estimate}. Assume in what follows that $r$ satisfies \eqref{eq:main2} or, equivalently, that
\begin{equation}\label{eq:main2_for_hatr}
\widehat r\geq\frac{m\binom{n}{2}}{2\log(n!)}+\frac{m}{2\log n}+\frac{\omega(n)}{(\log n)^2}.
\end{equation}
Each of the $n!^m$ summands on the left-hand side of the equation in \cref{prop:main-estimate} is at least $1$, so we must show that very few summands can be significantly larger than $1$.  To accomplish this, we will split the sum according to the values of $f_{k,k'}(\bpi)$ and $t_{k,k'}(\bpi)$.

Let $L(\bpi)$ be the sequence obtained by listing the pairs $(k,k') \in [m] \times [m]$ with $k<k'$ in decreasing order of $\wt_{k,k'}(\bpi)$ (breaking ties arbitrarily).  Now, let us construct a subsequence ${\vec{p}(\bpi)=(p_1(\bpi), \ldots, p_{m-1}(\bpi))}$ of $L(\bpi)$ recursively as follows. Let $p_1(\bpi)$ be the first pair in $L(\bpi)$. For $2\leq i\leq m-1$, let $p_i(\bpi)$ be the first pair in $L(\bpi)$ such that $p_i(\bpi)\not\in\{p_1(\bpi),\ldots,p_{i-1}(\bpi)\}$ and the (undirected) graph on the vertex set $[m]$ with the edge set $\{p_1(\bpi), \ldots, p_i(\bpi)\}$ is acyclic (where we are identifying the ordered pair $(k,k')$ with the unordered pair $\{k,k'\}$). In other words, $\vec{p}(\bpi)$ is the lexicographically first subsequence of $L(\bpi)$ whose entries form the edges of a spanning tree of the complete graph on $[m]$.  Notice that $\vec{p}(\bpi)$ is uniquely determined by $L(\bpi)$.

Writing $p_\ell(\bpi)=(k_\ell,k_\ell')$, we can use standard rearrangement inequalities to find that 
$$\wt(\bpi) \leq \sum_{\ell=1}^{m-1}\ell\left[\binom{f_\ell(\bpi)}{2}+t_\ell(\bpi) \right],$$
where $f_\ell(\bpi)\coloneq f_{k_\ell, k'_\ell}(\bpi)$ and $t_\ell(\bpi)\coloneq t_{k_\ell, k'_\ell}(\bpi)$.  Note that the number of permutations in $\mathfrak{S}_n$ with $f$ fixed points and $t$ $2$-cycles is at most
$$\binom{n}{f}\binom{n-f}{2t}(2t-1)!!(n-f-2t)!=\frac{n!}{f!2^t t!}.$$
Given a sequence $L$ and tuples $(f_1, \ldots, f_{m-1})$ and $(t_1, \ldots, t_{m-1})$, the number of tuples $\bpi \in \mathfrak{S}_n^m$ satisfying $L(\bpi)=L$ and $f_\ell(\bpi)=f_\ell$ and $t_\ell(\bpi)=t_\ell$ for all $1 \leq \ell \leq m-1$ is at most
$$n!^m \prod_{\ell=1}^{m-1} \frac{1}{f_\ell! 2^{t_\ell} t_\ell!};$$
here, we are crucially using the fact that the entries of $\vec{p}(\bpi)$ form the edge set of an acyclic graph.

Let $\Upsilon$ denote the set of tuples $(f_1,\ldots,f_{m-1},t_1,\ldots,t_{m-1})$ of integers satisfying the following conditions:
\begin{itemize}
\item $0\leq f_1,\ldots,f_{m-1}\leq n$; 
\item $0\leq t_1,\ldots,t_{m-1}\leq n/2$; 
\item $\binom{f_1}{2}+t_1\geq\cdots\geq\binom{f_{m-1}}{2}+t_{m-1}$. 
\end{itemize} 
Let 
\[\Upsilon_{\leq}\coloneq\{(f_1,\ldots,f_{m-1},t_1,\ldots,t_{m-1})\in\Upsilon:{\textstyle\binom{f_1}{2}}+t_1\leq{\textstyle\binom{\log n}{2}}\}\] and \[\Upsilon_{\geq}\coloneq\{(f_1,\ldots,f_{m-1},t_1,\ldots,t_{m-1})\in\Upsilon:{\textstyle\binom{f_1}{2}}+t_1\geq{\textstyle\binom{\log n}{2}}\}.\]
(We remark that in these definitions, the particular choice of $\binom{\log n}{2}$ for the cutoff is not important.)  For each $\bpi\in\mathfrak S_n^m$, the tuple $(f_1(\bpi),\ldots,f_{m-1}(\bpi),t_1(\bpi),\ldots,t_{m-1}(\bpi))$ belongs to $\Upsilon_{\leq}$ or $\Upsilon_{\geq}$.  The tuples $\bpi$ with $(f_1(\bpi),\ldots,f_{m-1}(\bpi),t_1(\bpi),\ldots,t_{m-1}(\bpi))\in\Upsilon_\leq$ will end up contributing the main term of $n!^m(1+o(1))$ in \cref{prop:main-estimate}, while the other tuples will end up contributing only to the $o(n!^m)$ error term. 

Let us begin with $\Upsilon_\leq$.  If $\bpi \in \mathfrak{S}_n^m$ is such that $(f_1(\bpi),\ldots,f_{m-1}(\bpi),t_1(\bpi),\ldots,t_{m-1}(\bpi))\in\Upsilon_\leq$, then $$\wt(\bpi)\leq \binom{m}{2}\binom{\log n}{2}.$$
Since there are at most $n!^m$ such tuples $\bpi$, the sum of $e^{\wt(\bpi)/\widehat r}$ over these tuples is at most
$$n!^m \exp \left( \binom{m}{2}\binom{\log n}{2}\frac{1}{\widehat r} \right) \leq n!^m \exp \left(O_m((\log n)^3/n) \right)=n!^m(1+o(1)),$$
where we have used \eqref{eq:main2}.

We now turn to $\Upsilon_\geq$.  Since there are at most $\binom{m}{2}!$ possibilities for the sequence $L(\bpi)$, the sum of $e^{\wt(\bpi)/\widehat r}$ over all tuples $\bpi \in \mathfrak{S}_n^m$ corresponding to a given tuple $T=(f_1, \ldots, f_{m-1},t_1, \ldots, t_{m-1}) \in \Upsilon_\geq$ is at most $\binom{m}{2}!n!^m X(T)$, where
\begin{equation}\label{eq:X(T)}
X(T)\coloneq\prod_{\ell=1}^{m-1} \left( \frac{1}{f_\ell! 2^{t_\ell} t_\ell!} \exp \left( \left[\binom{f_\ell}{2}+t_\ell\right] \frac{\ell}{\widehat r} \right) \right).
\end{equation}
We wish to find a uniform upper bound on $X(T)$ as $T$ ranges over the elements of $\Upsilon_\geq$.  

We first require a technical lemma. Define $g,h\colon\mathbb R_{\geq 1}\to\mathbb R_{\geq 0}$ by $g(x)=\frac{x(x-1)}{2}$ and ${h(x)=\frac{1+\sqrt{1+8x}}{2}}$ so that $g(h(x))=x$. Let $\Gamma$ denote the gamma function. 

\begin{lemma}\label{lem:Gamma}
If $t$ and $K$ are integers such that $0\leq t\leq K$, then \[\Gamma(h(K-t)+1)2^tt!\geq \Gamma(h(K)+1).\]
\end{lemma}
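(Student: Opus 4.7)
My plan is to reduce the lemma to a single one-step inequality that can be iterated. Since $t! \geq 1$, it suffices to prove
\[\Gamma(h(K)+1) \leq 2^t \, \Gamma(h(K-t)+1),\]
which follows by telescoping from the one-step claim
\[\Gamma(h(u)+1) \leq 2 \, \Gamma(h(u-1)+1) \quad \text{for every integer } u \geq 1.\]
Indeed, applying this $t$ times with $u$ decreasing from $K$ down to $K - t + 1$ (all $\geq 1$ because $t \leq K$) gives the claim.

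To prove the one-step inequality, set $y = h(u-1)$ and $\delta = h(u) - y$. The defining relation $g(y+\delta) - g(y) = 1$ simplifies to $\delta(2y+\delta-1) = 2$, whence $\delta \in (0, 1]$ whenever $y \geq 1$. Applying the log-convexity of $\log \Gamma$ at the three points $y+1,\; y+1+\delta,\; y+2$ yields
\[\Gamma(y+1+\delta) \leq (y+1)^{\delta}\, \Gamma(y+1),\]
so it remains to show $\delta \log(y+1) \leq \log 2$. I would handle this in three regimes. Case $u = 1$ (so $y = 1$, $\delta = 1$) is an equality. For $u \geq 4$, where $y \geq h(3) = 3$, the crude bound $\delta < 2/(2y-1)$ reduces the target to $(y+1)^2 \leq 2^{2y-1}$, which is elementary for real $y \geq 3$ by checking $y = 3$ and the second derivative. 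For the remaining cases $u \in \{2, 3\}$, use the identity $h(2)(h(2)-1) = 4$ (from $g(h(2)) = 2$) to write $\Gamma(h(2)+1) = 4\, \Gamma(h(2)-1)$. Since $h(2) - 1 = (\sqrt{17}-1)/2 \in [3/2, 2]$, log-convexity of $\log \Gamma$ on $[1, 2]$ (using $\Gamma(1) = \Gamma(2) = 1$) together with the monotonicity of $\Gamma$ on $[3/2, \infty)$ yields $\sqrt{\pi}/2 \leq \Gamma(h(2)-1) \leq 1$, so $2\sqrt{\pi} \leq \Gamma(h(2)+1) \leq 4$. The upper bound settles $u = 2$; for $u = 3$ the lower bound together with $\pi > 9/4$ gives $\Gamma(h(3)+1) = 6 \leq 4\sqrt{\pi} \leq 2\,\Gamma(h(2)+1)$.

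The main obstacle is that the one-step inequality is sharp at $u = 1$ and nearly sharp at $u \in \{2, 3\}$, so the quickest log-convexity estimate combined with the naive $\delta < 2/(2y-1)$ is not good enough for the small cases. The saving observation is the clean identity $h(2)(h(2)-1) = 4$, which expresses the irrational argument in terms of a rational multiple and allows the finite cases to be verified by hand. Dropping the $t!$ factor via $t! \geq 1$ at the outset is loss-free, since the constant $2$ in the one-step inequality already iterates precisely to $2^t$.
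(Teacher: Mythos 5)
Your proof is correct, and it follows the same high-level strategy as the paper --- reducing to the one-step inequality $\Gamma(h(u)+1)\le 2\,\Gamma(h(u-1)+1)$ for integers $u\ge 1$ and telescoping --- but you execute the one-step estimate quite differently. The paper keeps the $t!$ factor and writes the step as a ratio $\ge\frac{1}{2t}$, but then only uses the bound $\ge\frac12$, so dropping $t!\ge 1$ up front (as you do) loses nothing. For the one-step bound itself, the paper uses the concavity estimate $h(x+1)\le h(x)+\frac{2}{2h(x)-1}$ and then asserts as ``routine'' that $\Gamma(z)/\Gamma(z+\frac{2}{2z-1})\ge\frac12$ for $z\ge 2$; you instead use the exact identity $\delta(2y+\delta-1)=2$ for $\delta=h(u)-h(u-1)$, linearize via the log-convexity inequality $\Gamma(y+1+\delta)\le(y+1)^\delta\Gamma(y+1)$, and split into cases on $u$. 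The case split is the real content here: the one-step inequality is tight at $u=1$ and nearly tight at $u\in\{2,3\}$, so a crude uniform estimate cannot close those cases, and your direct treatment via the identity $h(2)(h(2)-1)=4$ together with the bounds $\sqrt{\pi}/2\le\Gamma(h(2)-1)\le 1$ handles them cleanly. In fact this is a genuine improvement in care over the paper's writeup: substituting $z=h(K-t)+1$ into the paper's displayed Gamma inequality gives shift $\frac{2}{2h(K-t)+1}$, whereas the first inequality in their chain requires the larger shift $\frac{2}{2h(K-t)-1}$, and with the larger shift the ratio drops to $\frac16$ at the endpoint $K-t=0$; your direct computation of the boundary cases $u\in\{1,2,3\}$ sidesteps this gap entirely. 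All the individual estimates you invoke ($\delta\in(0,1]$ for $y\ge 1$, $(y+1)^2\le 2^{2y-1}$ for $y\ge 3$, $\pi>9/4$) check out.
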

\begin{proof}
It suffices to prove that $\Gamma(h(K-(t-1))+1)2^{t-1}(t-1)!\leq\Gamma(h(K-t)+1)2^tt!$ whenever $1\leq t\leq K$. The identity $g(h(x))=x$ implies that $h'(x)=\frac{1}{g'(h(x))}=\frac{2}{2h(x)-1}$. Because $h''(x)<0$ for all $x>1$, we have 
\begin{equation}\label{eq:h'}
h(x+1)\leq h(x)+h'(x)=h(x)+\frac{2}{2h(x)-1}.
\end{equation} 
It is routine to verify that 
\begin{equation}\label{eq:Gamma}
\frac{\Gamma(z)}{\Gamma(z+\frac{2}{2z-1})}\geq \frac{1}{2}
\end{equation} for every real number $z\geq 2$. Assume $1\leq t\leq K$. Since $h(K-t)+1\geq h(0)+1=2$, we can set $x=K-t$ in \eqref{eq:h'} and set $z=h(K-t)+1$ in \eqref{eq:Gamma} to find that 
\[\frac{\Gamma(h(K-t)+1)}{\Gamma(h(K-t+1)+1)}\geq\frac{\Gamma(h(K-t)+1)}{\Gamma(h(K-t)+1+\frac{2}{2h(K-t)-1})}\geq\frac{1}{2}.\] Therefore, 
\begin{align*}
\frac{\Gamma(h(K-t)+1)2^tt!}{\Gamma(h(K-(t-1))+1)2^{t-1}(t-1)!}=2t\frac{\Gamma(h(K-t)+1)}{\Gamma(h(K-t+1)+1)}\geq t\geq 1,
\end{align*} as desired. 
\end{proof}

An immediate consequence of \cref{lem:Gamma} is that for each integer $K \geq 0$, the maximum value of \[\frac{1}{f! 2^{t} t!} \exp \left( \left[\binom{f}{2}+t\right] \frac{\ell}{\widehat r} \right),\] over all $f,t \in \mathbb{Z}_{ \geq 0}$ satisfying $\binom{f}{2}+t=K$, is at most
$$\frac{1}{\Gamma(h(K)+1)}\exp \left( \binom{h(K)}{2} \frac{\ell}{\widehat r} \right).$$
Applying this estimate to each multiplicand in the definition of $X(T)$, we find that
\begin{align}
\nonumber \max_{T \in \Upsilon_\geq}X(T) &\leq \max_{\substack{n \geq f_1 \geq \cdots \geq f_{m-1} \geq 0, \\ f_1\geq\log n}} \prod_{\ell=1}^{m-1} \frac{1}{\Gamma(f_\ell+1)} \exp \left( \binom{f_\ell}{2}\cdot \frac{\ell}{\widehat r} \right)\\
 &=\max_{f_1\in[\log n,n]} \exp(\varphi_{1/\widehat r}(f_1)) \max_{f_2\in[0,f_1]} \exp(\varphi_{2/\widehat r}(f_2)) \cdots \max_{f_{m-1} \in [0,f_{m-2}]} \exp(\varphi_{(m-1)/\widehat r}(f_{m-1})),
\end{align}
where the $f_\ell$'s run over intervals of real numbers and we have set
$$\varphi_q(f)\coloneq -\log \Gamma(f+1)+q\binom{f}{2}.$$  
We will now study the behavior of the functions $\varphi_q$.

Since the logarithm of the gamma function is convex (by the Bohr--Mollerup Theorem; see, e.g., {\color{Yellow}\cite{Artin}}) and the function $f\mapsto\binom{f}{2}$ is concave, the function $\varphi_q$ is also concave for all $q \geq 0$.  In particular, the maximum value of $\varphi_q$ on an interval is always assumed at one of the endpoints of the interval.  So the maximum over $f_{m-1}$ is achieved when either $f_{m-1}=0$ or $f_{m-1}=f_{m-2}$.  In the former case, we simply remove this term (since $\varphi_q(0)=0$ for all $q$).  In the latter case, we ``incorporate'' the $f_{m-1}$ term into the preceding $f_{m-2}$ term by noting that
$$\varphi_{(m-2)/\widehat r}(f_{m-2})+\varphi_{(m-1)/\widehat r}(f_{m-2})=2\varphi_{(m-3/2)/\widehat r}(f_{m-2}). $$
We then obtain the same dichotomy for the maximum over $f_{m-2}$, and we continue in this manner until we reach $f_1$, where the maximum occurs when either $f_1=n$ or $f_1=\log n$.  Thus, there is some $1 \leq s \leq m-1$ such that
$$\max_{T \in \Upsilon_\geq}X(T) \leq \max \left\{\exp \left( \sum_{\ell=1}^{s} \varphi_{\ell/\widehat r}(\log n) \right),\,\exp \left( \sum_{\ell=1}^{s} \varphi_{\ell/\widehat r}(n) \right) \right\}.$$
Then the sum of $e^{\wt(\bpi)/\widehat r}$ over all $\bpi \in \mathfrak{S}_n^m$ with ${(f_1(\bpi),\ldots,f_{m-1}(\bpi),t_1(\bpi),\ldots,t_{m-1}(\bpi))\in\Upsilon_{\geq}}$ 
is thus at most
$$\binom{m}{2}! (n^2/2)^{m-1} n!^m \max \left\{\exp \left( \sum_{\ell=1}^{s} \varphi_{\ell/\widehat r}(\log n) \right),\,\exp \left( \sum_{\ell=1}^{s} \varphi_{\ell/\widehat r}(n) \right) \right\},$$
so to prove \cref{prop:main-estimate}, it suffices to show that
$$(m-1)\log(n^2/2)+\max \left\{\sum_{\ell=1}^{s} \varphi_{\ell/\widehat r}(\log n),\,  \sum_{\ell=1}^{s} \varphi_{\ell/\widehat r}(n) \right\} \to -\infty$$
as $n \to \infty$.  We first check that
\begin{align*}
(m-1)\log(n^2/2)+\sum_{\ell=1}^{s} \varphi_{\ell/\widehat r}(\log n) &=(m-1)\log(n^2/2)-s \log( \Gamma(\log n+1))\!+\!\binom{s+1}{2}\binom{\log n}{2} \cdot \frac{1}{\widehat r}\\
 &=-s\log n \log\log n+O_m(\log n)
\end{align*}
tends to $-\infty$ (with room to spare in the asymptotic condition on $\widehat r$ in \eqref{eq:main2_for_hatr}).  We next consider
\begin{equation}\label{eq:whatever}
(m-1)\log(n^2/2)+\sum_{\ell=1}^{s} \varphi_{\ell/\widehat r}(n) =(m-1)\log(n^2/2)-s \log(n!)+\binom{s+1}{2}\binom{n}{2} \cdot \frac{1}{\widehat r}.
\end{equation}
If $s<m-1$, then the right-hand side of \eqref{eq:whatever} is $-(1+o(1))s(1-(s+1)/m)n\log n$, which certainly tends to $-\infty$.  If $s=m-1$, then the right-hand side of \eqref{eq:whatever} becomes
\[
(m-1) \left[\log(n^2/2)-\log(n!)+\frac{m}{2}\binom{n}{2} \cdot \frac{1}{\widehat r} \right],
\]
which tends to $-\infty$ by \eqref{eq:main2_for_hatr}.  This finishes the proof of \cref{prop:main-estimate} and hence also of \cref{thm:main}.

\section{Further Remarks} 

\subsection{Comments on the proof}

When $m=2$, our proof of \cref{thm:main} simplifies considerably but is still nontrivial.  In this case, each graph $G_{\bpi}$ is a disjoint union of edges (corresponding to edges fixed by $\pi_1 \pi_2^{-1}$) and even-length cycles.  One can then bound $N_{\bpi}$ using the known formulas for the chromatic polynomials of cycles in lieu of \cref{lem:entropy-bound}.
Moreover, the proof of \cref{prop:main-estimate} simplifies because the list $L(\bpi)$ consists of the single element $(1,2)$ and it is not necessary to extract the subsequence $\vec{p}(\bpi)$. 

An examination of our proofs shows that \cref{thm:main} continues to hold in the regime where $m$ grows reasonably slowly with $n$.  To optimize this dependence (which we have not attempted), one should tweak some of the parameters appearing in our proof (for instance, the cutoff $\binom{\log n}{2}$ in the definitions of $\Upsilon_\leq, \Upsilon_\geq$); we leave the details to the curious reader.

\subsection{Sharp thresholds}

\cref{thm:main} shows that the existence problem for rainbow stackings exhibits a sharp threshold, in the sense that the transition from having no rainbow stackings with high probability to having rainbow stackings with high probability occurs within an interval of length roughly $(2m-1)/3$.  It is natural to ask if the transition is even sharper; in particular, we pose the following problem.

\begin{problem}
Determine whether or not there exists a function $r_0\colon\mathbb N\to\mathbb R$ such that the following holds. For each $n\geq 1$, let $\chi_1,\ldots, \chi_m\colon \binom{[n]}{2} \to \mathcal{C}_r$ be independent uniformly random $r$-edge-colorings. If $r=r(n)$ satisfies $r(n)<r_0(n)$, then with high probability, there does not exist a rainbow stacking of $\chi_1,\ldots,\chi_m$. If $r=r(n)$ satisfies $r(n)>r_0(n)$, then with high probability, there exists a rainbow stacking of $\chi_1,\ldots,\chi_m$.
\end{problem}

In the past, the second-moment method has often been effective for obtaining analogous sharp results.  A well-known example is the proof 
of the $2$-point concentration of the independence
number of the Erd\H{o}s--R\'enyi random graph $G(n,1/2)$ (see, e.g., {\color{Yellow}\cite{AS,BE,Ma}}). Since, however,
the expected value of this quantity is only $O( \log n)$, such a sharp 
concentration is less dramatic than the sharp transition for rainbow stackings, where the
critical value of $r$ is on the order of $n/\log n$.

\subsection{Rainbow stackings of deterministic edge-colorings} 

It seems interesting to find sufficient (deterministic) conditions for the existence of rainbow stackings, even when $m=2$.  Proper edge-colorings provide a natural starting point.  We note that not every pair of proper edge-colorings has a rainbow stacking.

\begin{proposition}\label{prop:construction}
If $n=2^k-2$ for some integer $k \geq 2$, then there is a pair of proper edge-colorings of $K_n$ with no rainbow stackings.
\end{proposition}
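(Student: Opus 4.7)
The plan is to take $\chi_1=\chi_2$ equal to a single proper edge-coloring built from the $\mathbb{F}_2$-vector space structure of $\mathbb{F}_2^k$, and then to rule out rainbow stackings via a short parity identity in $\mathbb{F}_2^k$. Concretely, I would identify the vertex set of $K_n$ with $V\coloneq\mathbb{F}_2^k\setminus\{0,v\}$ for any fixed nonzero $v\in\mathbb{F}_2^k$ (so $|V|=2^k-2=n$) and define
\[
\chi(\{x,y\})\coloneq x+y.
\]
This $\chi$ is visibly a proper edge-coloring of $K_n$, since two edges $\{x,y\},\{x,z\}$ sharing a vertex would satisfy $\chi(\{x,y\})=\chi(\{x,z\})$ only if $y=z$.

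Next, I would translate the non-existence of a rainbow stacking of the pair $(\chi,\chi)$ into a clean algebraic statement. Setting $\tau\coloneq\sigma_2^{-1}\sigma_1$, the existence of a rainbow stacking is equivalent to the existence of some permutation $\tau\in\mathfrak{S}_V$ such that $\chi(\{\tau(x),\tau(y)\})\neq\chi(\{x,y\})$ for every edge $\{x,y\}$, which expands to
\[
\tau(x)+x\neq\tau(y)+y\qquad\text{for all distinct }x,y\in V.
\]
Equivalently, rainbow stackings correspond precisely to permutations $\tau\in\mathfrak{S}_V$ for which the map $\phi\colon V\to\mathbb{F}_2^k$, $\phi(x)\coloneq\tau(x)+x$, is injective.

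The core step is to rule out any such $\tau$ by computing $\sum_{x\in V}\phi(x)$ in two different ways. On one hand, since $\tau$ permutes $V$ and we are in characteristic $2$,
\[
\sum_{x\in V}\phi(x)=\sum_{x\in V}\tau(x)+\sum_{x\in V}x=2\sum_{x\in V}x=0.
\]
On the other hand, if $\phi$ is injective then $\phi(V)$ has size $|V|=2^k-2$ and therefore misses exactly two elements $s_1,s_2\in\mathbb{F}_2^k$, so
\[
\sum_{x\in V}\phi(x)=\sum_{z\in\mathbb{F}_2^k}z-s_1-s_2=s_1+s_2,
\]
where I use the identity $\sum_{z\in\mathbb{F}_2^k}z=0$ (valid for every $k\geq 2$ because each coordinate receives $2^{k-1}$ contributions of $1$). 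Combining the two computations forces $s_1+s_2=0$, i.e.\ $s_1=s_2$, contradicting the distinctness of $s_1$ and $s_2$.

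There is no really difficult step once the coloring is chosen: the entire argument rests on recognizing that restricting the natural $\mathbb{F}_2^k$-coloring from $K_{2^k}$ down to $K_{2^k-2}$ introduces exactly the parity defect that kills every candidate $\tau$ (this is essentially a ``complete-mapping'' obstruction on $V$, à la Hall--Paige). The hypothesis $k\geq 2$ enters only through the identity $\sum_{z\in\mathbb{F}_2^k}z=0$; in this sense the ``bad'' vertex counts $n=2^k-2$ are precisely those for which the algebraic framework above applies.
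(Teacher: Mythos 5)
Your proof is correct and is essentially the paper's argument: the paper uses the same Cayley sum-coloring $\chi_i(\{x,y\})=x+y$ on $\mathbb{F}_2^k\setminus\{u_i,v_i\}$ (allowing the two deleted pairs to differ, subject to $u_1+v_1=u_2+v_2$, of which your choice $\chi_1=\chi_2$ is the special case $u_1=u_2=0$, $v_1=v_2=v$), and rules out rainbow stackings by the identical double-count of $\sum_x\bigl(x+\sigma(x)\bigr)$ against the injectivity of $x\mapsto x+\sigma(x)$.
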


\begin{proof}
We provide an explicit construction of such a pair of colorings, based on a construction described in {\color{Yellow}\cite{AA}} (in the context of transversals in Latin squares).  Let $\mathbb{F}_2^k$ denote the elementary abelian $2$-group of rank $k$.  Let $u_1,v_1,u_2,v_2 \in \mathbb{F}_2^k$ be such that $u_1 \neq v_1$, $u_2 \neq v_2$, and $u_1+v_1=u_2+v_2$.  For each $i\in\{1,2\}$, let us identify the set $\mathbb F_2^k\setminus\{u_i,v_i\}$ with $[n]$ arbitrarily and define the coloring $\chi_i\colon \binom{\mathbb{F}_2^k \setminus\{u_i,v_i\}}{2} \to \mathbb{F}_2^k$ by $\chi_i(\{x,y\})\coloneq x+y$. It is clear that $\chi_1,\chi_2$ are proper edge-colorings.

We will show that the colorings $\chi_1,\chi_2$ do not admit a rainbow stacking.  Consider a bijection ${\sigma \colon \mathbb{F}_2^k \setminus\{u_1,v_1\} \to \mathbb{F}_2^k \setminus\{u_2,v_2\}}$.  We claim that there are distinct elements $x,y \in \mathbb{F}_2^k \setminus\{u_1,v_1\}$ such that $x+\sigma(x)=y+\sigma(y)$.  Indeed, if this were not the case, then the quantities $z+\sigma(z)$ for $z \in \mathbb{F}_2^k \setminus\{u_1,v_1\}$ would all be distinct.  Then, since $\sum_{z \in \mathbb{F}_2^k}z=0$, the quantity $\sum_{z \in \mathbb{F}_2^k \setminus\{u_1,v_1\}}(z+\sigma (z))$ would be the sum of two distinct elements of $\mathbb{F}_2^k$, so it would be nonzero.  At the same time, our choice of $u_1,v_1,u_2,v_2$ ensures that
$$\sum_{z \in \mathbb{F}_2^k \setminus\{u_1,v_1\}}(z+\sigma(z))=-(u_1+v_1)-(u_2+v_2)=0.$$
This contradiction establishes the claim.

Take $x,y$ as in the claim.  The fact that $\chi_1(\{x,y\})=x+y=\sigma(x)+\sigma(y)=\chi_2(\sigma(\{x,y\}))$ shows that $\sigma$ is not a rainbow stacking.
\end{proof}

We remark that the Cayley sum-graph construction in the proof of \cref{prop:construction} does not work when $n$ is sufficiently large and $n\neq 2^k-2$. Indeed, in this case, M\"uyesser and Pokrovskiy showed {\color{Yellow}\cite[Theorem~1.4]{MP}} that for any $n$-element subsets $A$ and $B$ of $\mathbb F_2^k$, there exists a bijection $\sigma\colon A\to B$ such that the sums of the form $a+\sigma(a)$ for $a\in A$ are all distinct.

Motivated by these observations and by a conjecture of Ryser about the
existence of transversals in Latin squares of odd order 
(see {\color{Yellow}\cite{Ry, Mo}}), we ask the following question.

\begin{question}\label{quest:odd}
Is it true that when $n$ is odd, every pair of proper edge-colorings of $K_n$ admits a rainbow stacking?
\end{question}

We remark that the answer to \cref{quest:odd} is ``yes'' when $n=3$ (by inspection) and when $n=5$ (by computer search).  It seems that a general affirmative resolution of this question would be difficult; it may be easier to start with proper edge-colorings in which no color appears a large number of times.

\subsection{Hypergraphs}
It could be interesting to extend our work to random edge-colorings of complete $d$-uniform hypergraphs for $d>2$. 

\section*{Acknowledgments}
Noga Alon was supported by the NSF grant DMS-2154082. Colin Defant was supported by the NSF under grant 2201907 and by a Benjamin Peirce Fellowship at Harvard University.  Noah Kravitz was supported in part  by the NSF Graduate Research Fellowship Program under grant DGE--203965.

{\color{Yellow}
}

\end{document}